\newtheorem{theorem}{Theorem}
\newtheorem{lemma}[theorem]{Lemma}
\newtheorem{definition}[theorem]{Definition}
\newtheorem{remark}{Remark}
\newtheorem{corollary}[theorem]{Corollary}
\newtheorem{example}[theorem]{Example}
\def\nin{\relax\hbox{$/\kern-.7em{\rm \in\,}$}}
\begin{document}


\centerline {\Large Boundedness of  Hausdorff-type operators}
\centerline {\Large with  two-variable kernels on Lebesgue spaces}\label{ch:6}

\

\centerline { A. R. Mirotin}

\centerline {amirotin@yandex.ru}

\bigskip

Abstract.  A general  concept of a Hausdorff-type operator that absorbs  all types of operators bearing the name `` Hausdorff operator'' and many others is considered. The characteristic features of this  concept are the consideration of kernels depending on an external variable and the action between two arbitrary different sets.
  Generalizations and analogs of  classical results on $L^p$ boundedness  of various type of Hausdorff operators  are proved for the case of such operators.
\bigskip

Key words and phrases.  Hausdorff operator, Integral operator, Two-variable kernel,  Lebesgue space, Bounded operator

\bigskip

\section{Introduction and preliminaries}\label{sec:intr_two}

In resent two  decades many different notions of a Hausdorff operator have been suggested (see, e.~g., \cite{BM1, BM2, CGS,  CFW, DK-2024,  DZDW, GS, KM,  KM2, KSZhu, LL, emj, JMAA,  Lie, MCAOT, MCAOT2, S,  SG}, and especially the survey article \cite{LM:survey}).

In this note following \cite{Mir_Lobach} we consider  a general  concept of a Hausdorff-type operator  that absorbs  all types of operators considered in the works mentioned above and many others.  The characteristic features of our definition are the consideration of kernels depending on an external variable and the action between two arbitrary different sets.
  Generalizations and analogs of  classical results on $L^p$ boundedness  of various type of Hausdorff operators  are proved for the case of such operators.
  
  The note  consists of two sections: Introduction  and preliminaries and  boundedness  of a  Hausdorff-type operators with a two-variable kernel  on Lebesgue spaces.

Throughout  we will accept the next definition.

\begin{definition}\label{df:general1_two}  Let  $S$ and  $S'$ be two sets, $(\Omega,\mu)$ denotes some measure space, and $A(u): S\to S'$ ($u\in  \Omega $) be some family of mappings\footnote{$A(u)$ do not assumed to be invertible.}.  Let  $\Phi(u,x)$ be a given  function on $\Omega\times S$ which is $\mu$-measurable for every fixed $x\in S$, and $V$ some Banach space.
 {\it A Hausdorff-type operator  with a two-variable kernel} acts on a  functions $f: S' \to V$  by the rule
\begin{equation}\label{general_two}
(\mathcal{H}_{\Phi,A,\mu}f)(x) =\int_{\Omega} \Phi(u,x)f(A(u)(x))d\mu(u), \ x\in S,
\end{equation}
provided the integral  converges in a suitable  sense.
\end{definition}
In this note, we consider the case  $V=\mathbb{C}$ only.

As was mentioned above Hausdorff operators in such generality were first considered in \cite{Mir_Lobach}.
 The Definition \ref{df:general1_two}  covers all known classes of operators bearing the name ``Hausdorff'' and also many other classical and new types of operators and transformations such as  transformations arise in  classical summation methods,  classical and  discrete Hilbert transforms and there generalizations, integral Hankel operators, orbital integrals,  convolution operators on groups, Hadamard-Bergmann convolutions etc. 
  (see \cite{LM:survey, arxGeneral})

In the case were the kernel $\Phi(u,x)=\Phi(u)$ does not depend of $x$ we call $\mathcal{H}_{\Phi,A,\mu}$ {\it a Hausdorff-type operator  with a one-variable kernel.} Thus, a Hausdorff-type operator  with a one-variable kernel looks as follows

\begin{equation}\label{general_one}
(\mathcal{H}_{\Phi,A,\mu}f)(x) =\int_{\Omega} \Phi(u)f(A(u)(x))d\mu(u), \ x\in S.
\end{equation}

Of cause, in the case where $A(u)(x)=u$ for all $x$ and $u$, the Definition \ref{df:general1_two}  turns into the classical definition of an integral operator. But we  are interested in the opposite ``nonclassical'' case. This class of operators bears specific features comparison with the classical one. For example, it contains an important  subclass of operators with a one-variable kernel,  a non-trivial one-dimensional Hausdorff operator  in $L^p$ with a one-variable kernel is always non-compact,  non-quasinilpotent, and even non Riesz (see \cite{RJMP}).  Even in the case of a one-variable kernel this   class  covers a lot of classical and new types of operators but, e.~g., Hadamard-Bergman  operators with two-variable kernels  fit into our definition, too (see \cite{KMirM, arxGeneral}).

\bigskip

\section{Boundedness  of a  Hausdorff-type operators with a two-variable kernel on Lebesgue spaces}\label{sec:Lqp_two}

\bigskip

In this section, $(\Omega,\mu)$, $(S,\nu)$,  and $(S',\nu')$ denote three  measure spaces. We will assume the following  rather weak form of interaction of the family of mappings $A(u)$ with measures $\nu$ and $\nu'$.

\begin{definition}\label{egree} 
 We say that a family $A(u): S\to S'$ ($u\in \Omega$) 
\textit{agrees with  measures $\nu$ and  $\nu'$ in a weak form}   if for each $\nu'$-measurable set $E\subset S'$ of finite measure and for every $u\in \Omega$ the set $A(u)^{-1}(E)$ is  $\nu$-measurable and
\begin{equation}\label{nuA(u)}
\nu(A(u)^{-1}(E))\le m(u)^{-1}\nu'(E)
\end{equation}
for some positive $\mu$-measurable function $m$ on $\Omega$.\footnote{In fact, $m(u)$  depends on $A(u)$.}

In the case where $(S,\nu)=(S',\nu')$ we say that $(A(u))_{u\in \Omega}$
\textit{agrees with   $\nu$  in a weak form}.
\end{definition}

\begin{lemma}\label{lm:IfA(u)}
Under the assumptions of  Definition \ref{egree} one has  for a function  $g\in L^1(\nu')$, $g(x')\ge 0$  that
\begin{equation}\label{IfA(u)}
\int_{S}g(A(u)(x))d\nu(x)\le m(u)^{-1}\int_{S'}g(x')d\nu'(x)
\end{equation}
\end{lemma}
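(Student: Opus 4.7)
The plan is to reduce the claim to the defining inequality (\ref{nuA(u)}) by a standard layer-cake (Cavalieri) argument, building up from indicator functions to general non-negative integrable $g$.

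First, I would verify the inequality in the model case $g=\chi_{E}$, where $E\subset S'$ has finite $\nu'$-measure. Here $g(A(u)(x))=\chi_{A(u)^{-1}(E)}(x)$, so the left-hand side of (\ref{IfA(u)}) equals $\nu(A(u)^{-1}(E))$, and the statement is exactly (\ref{nuA(u)}).

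Next, for a general non-negative $g\in L^1(\nu')$, I would invoke the layer-cake representation
\begin{equation*}
g(x')=\int_0^\infty \chi_{\{g>t\}}(x')\,dt.
\end{equation*}
Since $g\in L^1(\nu')$, Markov's inequality gives $\nu'(\{g>t\})\le t^{-1}\|g\|_{L^1(\nu')}<\infty$ for every $t>0$, so Definition~\ref{egree} ensures that each $A(u)^{-1}(\{g>t\})$ is $\nu$-measurable and (\ref{nuA(u)}) applies to it. Substituting $A(u)(x)$ for $x'$ yields
\begin{equation*}
g(A(u)(x))=\int_0^\infty \chi_{A(u)^{-1}(\{g>t\})}(x)\,dt,
\end{equation*}
which incidentally exhibits $x\mapsto g(A(u)(x))$ as $\nu$-measurable. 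Integrating over $S$ and applying Tonelli's theorem (the integrand is non-negative), then (\ref{nuA(u)}), and finally the layer-cake formula on $S'$, gives the chain
\begin{equation*}
\int_S g(A(u)(x))\,d\nu(x)=\int_0^\infty \nu(A(u)^{-1}(\{g>t\}))\,dt\le m(u)^{-1}\int_0^\infty \nu'(\{g>t\})\,dt = m(u)^{-1}\int_{S'}g\,d\nu',
\end{equation*}
which is the desired inequality (\ref{IfA(u)}).

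The only subtle point, and the main obstacle I would flag, is the $\nu$-measurability of $g\circ A(u)$: Definition~\ref{egree} only supplies measurability of pre-images of sets of \emph{finite} $\nu'$-measure. The layer-cake route finesses this, since Markov's inequality forces every level set $\{g>t\}$ (for $t>0$) appearing in the representation to have finite $\nu'$-measure, and measurability of the composition then follows as an increasing limit of simple functions built from such level sets.
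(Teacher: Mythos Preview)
Your proof is correct and follows the same overall strategy as the paper: verify (\ref{IfA(u)}) for indicators of sets of finite $\nu'$-measure, where it reduces to (\ref{nuA(u)}), and then pass to general non-negative $g\in L^1(\nu')$ by a standard limiting device. The only difference is in that last step: the paper approximates $g$ from below by non-negative simple functions $g_n\uparrow g$ and invokes the Monotone Convergence Theorem, whereas you use the layer-cake representation together with Tonelli. These are equivalent routine mechanisms; your version has the minor advantage that it makes explicit why the relevant sets have finite $\nu'$-measure (via Markov's inequality) and hence why $g\circ A(u)$ is $\nu$-measurable, a point the paper's proof leaves implicit.
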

\begin{proof}
If $g=\chi_{E}$ where $\nu'(E)<\infty$ the inequality \eqref{IfA(u)} turns into \eqref{nuA(u)}  ($\chi_{E}$ denotes the indicator of a  set $E$). So,    the inequality \eqref{IfA(u)} is valid for all simple functions $g=\sum_j c_j\chi_{E_j}$ with $c_j\ge 0$, $\nu'(E_j)<\infty$. In the general case, one can choose a sequence of non-negative simple functions $g_n$ such that $g_n\uparrow g$  point-wise ($n\to\infty$) and apply the Monotone Convergence Theorem.
\end{proof}

For the proof of the main result of this note, we put for an arbitrary real $p$ and $q$
\begin{equation}\label{r}
r=r(p,q):=
\begin{cases}
\frac{q}{q-p}, \mbox{ if } p\ne q,\\
\infty, \mbox{ if } p=q,
\end{cases} \ r(p,\infty):=1,
\end{equation}
and for  a given  function $\Phi(u,x)$ on $\Omega\times S$ which is $\mu\otimes\nu$-measurable we introduce the mixed norms
$$
\|\Phi\|_{\mu,\nu,m}^{(p,q)}:=\int_\Omega\|\Phi(u,\cdot)\|_{L^{pr}(\nu)}m(u)^{-\frac 1q}d\mu(u),
$$
$$
\|\Phi\|_{\mu,\nu,m}^{(p,\infty)}:=\int_\Omega\|\Phi(u,\cdot)\|_{L^{p}(\nu)}d\mu(u),
$$
$$
\|\Phi\|_{\mu,\nu,m}^{(\infty,\infty)}:={\rm esssup}_{x\in S}\int_\Omega |\Phi(u,x)|d\mu(u).
$$
In particular if $p\ne \infty$
$$
\|\Phi\|_{\mu,\nu,m}^{(p,p)}:=\int_\Omega \|\Phi(u,\cdot)\|_{L^{\infty}(\nu)}m(u)^{-\frac 1p}d\mu(u).
$$

\begin{theorem}\label{th:bdd_Lqp_two} Let $\infty>q\ge p\ge 1$,  or  $\infty=q> p\ge 1$, or $q=p= \infty$, the measure $\nu$ be   sigma-finite, and the family $(A(u))_{u\in \Omega}$ agrees with $\nu$ and $\nu'$  in a weak form.
If
\begin{equation}\label{Phi_qp}
 \|\Phi\|_{\mu,\nu,m}^{(p,q)}<\infty
\end{equation}
then the operator \eqref{df:general1_two}
 is bounded as an operator between $L^q(\nu')$ and  $L^p(\nu)$, and its norm does not exceed $\|\Phi\|_{\mu,\nu,m}^{(p,q)}$.
\end{theorem}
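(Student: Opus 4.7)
The plan is to estimate $\|\mathcal{H}_{\Phi,A,\mu}f\|_{L^p(\nu)}$ by first pulling the $L^p(\nu)$-norm under the $\mu$-integral via Minkowski's integral inequality (this is where sigma-finiteness of $\nu$ is used), and then, for each fixed $u\in\Omega$, applying H\"older's inequality in the variable $x$ to split $\Phi(u,x)$ from $f(A(u)(x))$. Finally, the contribution of $f\circ A(u)$ will be controlled on $S'$ by means of Lemma \ref{lm:IfA(u)}. Thus the skeleton is
\begin{equation*}
\bigl\|\mathcal{H}_{\Phi,A,\mu}f\bigr\|_{L^p(\nu)}\le \int_{\Omega}\bigl\|\Phi(u,\cdot)\,f(A(u)(\cdot))\bigr\|_{L^p(\nu)}\,d\mu(u),
\end{equation*}
and the inner norm will be shown to be bounded by $\|\Phi(u,\cdot)\|_{L^{pr}(\nu)}\,m(u)^{-1/q}\,\|f\|_{L^q(\nu')}$; integrating over $\Omega$ gives exactly $\|\Phi\|_{\mu,\nu,m}^{(p,q)}\|f\|_{L^q(\nu')}$.

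For the generic case $\infty>q\ge p\ge 1$ with $p\ne q$ I would apply H\"older on $(S,\nu)$ to the product $|\Phi(u,x)|^p\cdot |f(A(u)(x))|^p$ with conjugate exponents $s=r=q/(q-p)$ and $s'=q/p$. This matches the exponents so that $\|\Phi(u,\cdot)\|_{L^{ps}(\nu)}=\|\Phi(u,\cdot)\|_{L^{pr}(\nu)}$ appears, and $|f\circ A(u)|^{ps'}=|f\circ A(u)|^{q}$, at which point Lemma \ref{lm:IfA(u)} (applied to $g=|f|^q\in L^1(\nu')$) yields
\begin{equation*}
\int_S |f(A(u)(x))|^q\,d\nu(x)\le m(u)^{-1}\|f\|_{L^q(\nu')}^{q}.
\end{equation*}
Taking $p$-th roots and $q$-th roots gives the required factor $m(u)^{-1/q}$.

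The remaining cases are essentially degenerate reductions of the same computation. For $p=q<\infty$ (so $r=\infty$) I would take H\"older with exponents $(\infty,1)$, using $\|\Phi(u,\cdot)\|_{L^\infty(\nu)}$ in place of the $L^{pr}$-norm and then Lemma \ref{lm:IfA(u)} with $g=|f|^p$; this reproduces the formula $\|\Phi\|_{\mu,\nu,m}^{(p,p)}$. For $q=\infty$, $p<\infty$ I would use $|f(A(u)(x))|\le\|f\|_{L^\infty(\nu')}$ pointwise, so no appeal to Lemma \ref{lm:IfA(u)} is needed and the bound collapses to $\|\Phi(u,\cdot)\|_{L^p(\nu)}\|f\|_\infty$. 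For $p=q=\infty$ the estimate is immediate by pulling $\|f\|_\infty$ out and taking the essential supremum in $x$.

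I do not anticipate a genuine obstacle; the main care points are (i) verifying Minkowski's integral inequality applies (which uses sigma-finiteness of $\nu$ and the joint measurability of $\Phi$), (ii) choosing the H\"older exponents so that the power of $f\circ A(u)$ is exactly $q$ (the case $p=q$ forces the convention $r=\infty$ and must be treated separately), and (iii) keeping track of the exponent $1/q$ on $m(u)^{-1}$ after taking $p$-th roots, which is the source of the convention $r(p,\infty):=1$ in the formula for the mixed norm when $q=\infty$.
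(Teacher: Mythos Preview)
Your proposal is correct and follows essentially the same route as the paper: Minkowski's integral inequality, then H\"older in $x$ with the conjugate pair $(r,r')$ and Lemma~\ref{lm:IfA(u)} to transfer the $f$-integral to $S'$, with the cases $p=q<\infty$, $q=\infty>p$, and $p=q=\infty$ handled as degenerate variants. The only nuance the paper makes explicit that you gloss over is in the $q=\infty$ case: the bound $|f(A(u)(x))|\le\|f\|_{L^\infty(\nu')}$ holds only $\nu$-a.e.\ in $x$, and this requires the weak-agreement condition~\eqref{nuA(u)} (to ensure $A(u)^{-1}$ of a $\nu'$-null set is $\nu$-null), so some appeal to Definition~\ref{egree} is still needed there.
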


\begin{proof}
Let $p\ne \infty$. Using Minkowskii integral
inequality  we have for $1\le p<\infty$ and for an arbitrary $f\in L^q(\nu')$
\begin{eqnarray}\label{2_two}
\|\mathcal{H}_{\Phi, A,\mu}f\|_{L^p(\nu)}&=&\left(\int_{S} \left|\int_\Omega \Phi(u,x)f(A(u)(x))d\mu(u) \right|^pd\nu(x)\right)^{\frac 1p}\\ \nonumber
&\leq&
\int_\Omega\left(\int_{S}|\Phi(u,x)|^p|f(A(u)(x))|^pd\nu(x)\right)^{\frac 1p}d\mu(u).
\end{eqnarray}

First we assume that  $\infty>q\ge p\ge 1$.

Then by the Holder inequality and Lemma \ref{lm:IfA(u)}
\begin{align}\label{3_two}
\int_{S}|\Phi(u,x)|^p|f(A(u)(x))|^pd\nu(x)\\  \nonumber
&\hspace{-28mm}\le\left(\int_{S}|\Phi(u,x)|^{pr}d\nu(x)\right)^{\frac 1r} \left(\int_{S}|f(A(u)(x))|^{pr'}d\nu(x)\right)^{\frac 1{r'}}\\  \nonumber
&\hspace{-28mm}\le m(u)^{-\frac 1{r'}}\left(\int_{S}|\Phi(u,x)|^{pr}d\nu(x)\right)^{\frac 1r}
\left(\int_{S}|fx')|^{pr'}d\nu'(x')\right)^{\frac 1{r'}}
\end{align}
where $r$ is given by formula \eqref{r} and, as usual, $\frac 1r+\frac 1{r'}=1$.  Substituting this into \eqref{2_two} and taking into account that $pr'=q$ we have
\begin{eqnarray*}
\|\mathcal{H}_{\Phi, A,\mu}f\|_{L^p(\nu)}
\leq  \|\Phi\|_{\mu,\nu,m}^{(p,q)}\|f\|_{L^q(\nu')}
\end{eqnarray*}
and thus
\begin{eqnarray}\label{4_two}
\|\mathcal{H}_{\Phi, A,\mu}\|_{L^q(\nu')\to L^p(\nu)}
\leq  \|\Phi\|_{\mu,\nu,m}^{(p,q)}.
\end{eqnarray}

If $\infty=q> p\ge 1$, then in the previous argument the inequality \eqref{3_two} should be replaced by
\begin{align}\label{10_two}
\int_{S}|\Phi(u,x)|^p|f(A(u)(x))|^pd\nu(x)
\le\int_{S}|\Phi(u,x)|^{p}d\nu(x)\|f\|_{L^\infty(\nu')}^p
\end{align}
which is valid for every $u\in \Omega$ due to \eqref{nuA(u)}. Indeed, let 
$$
N':=\{x'\in S': |f(x')|>\|f\|_{L^\infty(\nu')}\},
$$
and
$$
N_u:=\{x\in S: |f(A(u)(x))|>\|f\|_{L^\infty(\nu')}\}.
$$
Then $\nu'(N')=0$, $N_u\subseteq A(u)^{-1}(N')$ and  therefore $\nu(N_u)=0$ for every $u\in \Omega$ by \eqref{nuA(u)}.
Thus, for every $u\in \Omega$ 
\begin{align}\label{20_two}
 |f(A(u)(x))|\le \|f\|_{L^\infty(\nu')}^p
 \end{align}
  for $\nu$-a.~e. $x\in S$
and therefore \eqref{10_two} holds for all $u\in \Omega$.

In view of \eqref{20_two} the proof in the case $q=p= \infty$ is almost obvious:
\begin{eqnarray*}
\|\mathcal{H}_{\Phi, A,\mu}f\|_{L^\infty(\nu)}&\leq& {\rm esssup}_{x\in S}\int_\Omega |\Phi(u,x)||f(A(u)(x))|d\mu(u)\\
&\leq & {\rm esssup}_{x\in S}\int_\Omega |\Phi(u,x)|d\mu(u)\|f\|_{L^\infty(\nu')}\\
&=&\|\Phi\|_{\mu,\nu,m}^{(\infty,\infty)}\|f\|_{L^\infty(\nu')}.
\end{eqnarray*}
\end{proof}

In the next corollary for a $\mu$-measurable function $\varphi$ on $\Omega$ we put
$$ 
  \|\varphi\|_{m,\mu}^{(p)}:=\int_\Omega |\varphi(u)|m(u)^{-\frac 1p}d\mu(u)
$$
(here $ \|\varphi\|_{m,\mu}^{\infty}:=\|\varphi\|_{L^1(\mu)}$).

\begin{corollary}\label{cr:Lp} Let   the measure $\nu$ be   sigma-finite, the family $(A(u))_{u\in \Omega}$ agrees with $\nu$ and $\nu'$  in a weak form,  and $1\leq p< \infty$. Let  a $\mu$-measurable function $\varphi$ on $\Omega$ be such that $|\Phi(u,x)|\le \varphi(u)$ for $\mu$-a.~e. $u\in  \Omega$ and all $x\in S$. If $\|\varphi\|_{m,\mu}^{(p)}<\infty$
 then the operator \eqref{general_one}
 is bounded as an operator between $L^p(\nu')$ and  $L^p(\nu)$, and its norm does not exceed $ \|\varphi\|_{m,\mu}^{(p)}$.
\end{corollary}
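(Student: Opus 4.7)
The plan is to deduce Corollary \ref{cr:Lp} as a direct specialization of Theorem \ref{th:bdd_Lqp_two} to the diagonal case $q = p$ with $1 \le p < \infty$. This case falls inside the first regime of the theorem, namely $\infty > q \ge p \ge 1$, so no separate argument is needed.

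First I would note that, by the definition \eqref{r}, $r(p,p) = \infty$, so the conjugate exponent satisfies $1/r' = 1$, and the mixed norm reduces to the displayed ``in particular'' formula
$$
\|\Phi\|_{\mu,\nu,m}^{(p,p)} = \int_\Omega \|\Phi(u,\cdot)\|_{L^\infty(\nu)} m(u)^{-\frac 1p} d\mu(u).
$$
For the operator in \eqref{general_one} the kernel is $\Phi(u,x) = \Phi(u)$, so one may take $\varphi(u) := |\Phi(u)|$; more generally, under the hypothesis $|\Phi(u,x)| \le \varphi(u)$ for $\mu$-a.e.\ $u$ and all $x$, we have $\|\Phi(u,\cdot)\|_{L^\infty(\nu)} \le \varphi(u)$ for $\mu$-a.e.\ $u$. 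Integrating this pointwise estimate against $m(u)^{-1/p} d\mu(u)$ yields
$$
\|\Phi\|_{\mu,\nu,m}^{(p,p)} \le \int_\Omega \varphi(u) m(u)^{-\frac 1p} d\mu(u) = \|\varphi\|_{m,\mu}^{(p)},
$$
which is finite by assumption.

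Then the hypothesis \eqref{Phi_qp} of Theorem \ref{th:bdd_Lqp_two} holds (with $q = p$), so that theorem applies and delivers the boundedness of $\mathcal{H}_{\Phi,A,\mu} : L^p(\nu') \to L^p(\nu)$ together with the norm bound
$$
\|\mathcal{H}_{\Phi,A,\mu}\|_{L^p(\nu') \to L^p(\nu)} \le \|\Phi\|_{\mu,\nu,m}^{(p,p)} \le \|\varphi\|_{m,\mu}^{(p)}.
$$
Since every hypothesis of Theorem \ref{th:bdd_Lqp_two} ($\sigma$-finiteness of $\nu$, weak agreement of $(A(u))$ with $\nu, \nu'$) is assumed in the corollary, this completes the argument.

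There is essentially no obstacle: the content is simply that passing from a two-variable kernel $\Phi(u,x)$ to a dominating one-variable majorant $\varphi(u)$ lets one replace the $L^\infty(\nu)$-norm in $u$ by $\varphi(u)$ itself. The only point worth checking carefully is that $q = p$ with $p < \infty$ is indeed included in the ``$\infty > q \ge p \ge 1$'' branch of the theorem, which it is.
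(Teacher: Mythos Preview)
Your proof is correct and follows exactly the paper's approach: the paper's proof consists of the single sentence ``This follows from Theorem \ref{th:bdd_Lqp_two} with $p=q$,'' and you have simply unpacked that sentence by identifying $r(p,p)=\infty$, bounding $\|\Phi(u,\cdot)\|_{L^\infty(\nu)}\le\varphi(u)$, and concluding $\|\Phi\|_{\mu,\nu,m}^{(p,p)}\le\|\varphi\|_{m,\mu}^{(p)}$.
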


\begin{proof} This follow from Theorem \ref{th:bdd_Lqp_two} with $p=q$.
 \end{proof}

As a special case of the  previous result we obtain the following

\begin{corollary}\label{cr:Lp2} Let  the measure $\nu$ be   sigma-finite, the family $(A(u))_{u\in \Omega}$ agrees with $\nu$  and  $\nu'$   in a weak form,  and $1\leq p< \infty$.  If a $\mu$-measurable function $\Phi(u)$ on $\Omega$ satisfies $\|\Phi\|_{m,\mu}^{(p)}<\infty$
 then a Hausdorff-type  operator \eqref{general_one}  with a one variable kernel  is bounded as an operator between $L^p(\nu')$ and  $L^p(\nu)$, and its norm does not exceed $ \|\Phi\|_{m,\mu}^{(p)}$.
\end{corollary}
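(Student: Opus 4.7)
The plan is to derive Corollary \ref{cr:Lp2} as an immediate specialization of Corollary \ref{cr:Lp}, using the fact that a one-variable kernel is trivially dominated by itself regarded as a function of the external variable $u$ alone.

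First I would note that in the one-variable setting the kernel $\Phi(u,x) = \Phi(u)$ is independent of $x \in S$. Setting $\varphi(u) := |\Phi(u)|$, the pointwise domination $|\Phi(u,x)| \le \varphi(u)$ holds trivially for every $x \in S$ and for $\mu$-a.e. $u \in \Omega$. Moreover, since $|\varphi| = |\Phi|$, the two mixed norms coincide:
\begin{equation*}
\|\varphi\|_{m,\mu}^{(p)} = \int_\Omega |\Phi(u)| \, m(u)^{-1/p} d\mu(u) = \|\Phi\|_{m,\mu}^{(p)}.
\end{equation*}
In particular, the standing hypothesis $\|\Phi\|_{m,\mu}^{(p)} < \infty$ is exactly the hypothesis $\|\varphi\|_{m,\mu}^{(p)} < \infty$ required by Corollary \ref{cr:Lp}.

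Next I would invoke Corollary \ref{cr:Lp} directly: under the sigma-finiteness of $\nu$ and the weak agreement of the family $(A(u))_{u \in \Omega}$ with $\nu$ and $\nu'$, this gives boundedness of the operator \eqref{general_one} from $L^p(\nu')$ to $L^p(\nu)$ with norm bounded by $\|\varphi\|_{m,\mu}^{(p)} = \|\Phi\|_{m,\mu}^{(p)}$, which is precisely the conclusion of Corollary \ref{cr:Lp2}.

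There is no real obstacle here, since this is purely a matter of recognizing that one-variable kernels fall within the domination framework of the preceding corollary with the optimal choice $\varphi = |\Phi|$. The only thing worth checking explicitly is that the domination hypothesis in Corollary \ref{cr:Lp} is sharp in this degenerate case, so that no constant is lost in passing from $\varphi$ back to $\Phi$; this is clear from the displayed identity of norms above.
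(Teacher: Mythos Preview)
Your proposal is correct and matches the paper's approach exactly: the paper introduces Corollary~\ref{cr:Lp2} with the sentence ``As a special case of the previous result we obtain the following'' and gives no further argument, so your explicit choice $\varphi=|\Phi|$ simply spells out that specialization.
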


In connection with the next corollary, we note that throughout 
 ${\rm mod}(\psi)$ stands for   the modulus of a topological automorphism $\psi$ of a locally compact group $G$, and
 the family $\mathrm{Aut}(G)$ of all topological automorphisms of  $G$ is assumed to be equipped with its natural topology (the Braconnier topology).
In this topology the sets 
$$
\mathcal{O}(C, W):=\{\psi\in \mathrm{Aut}(G): \psi(x)x^{-1}\in W,   \psi^{-1}(x)x^{-1}\in W \forall x\in C\} 
$$
where     $C$ runs over  all
compact subsets of $G$ and $W$ runs over all neighborhoods of the unit  $e\in G$
constitute a fundamental system of neighborhoods of the identity  (see, e.~g., \cite[(26.1)]{HiR}, \cite[Section III.3]{Hoch}).

\begin{corollary}\label{cr:Lp_group} Let   $G$ be a $\sigma$-compact locally compact group     with   the Haar measure $\nu$.    Assume that the map $u\mapsto  A(u)$ from $\Omega$ to $\mathrm{Aut}(G)$ is measurable with respect to the measure $\mu$ in $\Omega$ and a  function $\Phi(u,x)$ on $\Omega\times S$ is $\mu\otimes\nu$-measurable. Then the statement of Theorem \ref{th:bdd_Lqp_two} is valid with  $S=S'=G$  and $m(u)={\rm mod}(A(u))$  the modulus of a topological automorphism $A(u)$.
\end{corollary}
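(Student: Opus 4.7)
The plan is to deduce this corollary directly from Theorem~\ref{th:bdd_Lqp_two} by verifying, under the group-theoretic hypotheses, the three conditions needed to invoke it: $\sigma$-finiteness of $\nu$, the agreement condition of Definition~\ref{egree} with the given choice $m(u) = \mathrm{mod}(A(u))$, and the $\mu$-measurability of $m$.

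The first item is immediate: any $\sigma$-compact locally compact group is a countable union of compact sets, and each compact set has finite Haar measure, so $\nu$ is $\sigma$-finite. For the second item, I would use that each $A(u) \in \mathrm{Aut}(G)$ is in particular a homeomorphism of $G$, so the preimage $A(u)^{-1}(E)$ of any Borel set $E \subset G$ is again Borel, hence $\nu$-measurable. Then the defining property of the modulus gives
$$
\nu(A(u)^{-1}(E)) = \mathrm{mod}(A(u))^{-1}\nu(E) = m(u)^{-1}\nu(E),
$$
so \eqref{nuA(u)} holds with equality (in fact for all Borel $E$, not just those of finite measure). This handles agreement with both $\nu$ and $\nu' = \nu$ simultaneously.

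The only step that is not mechanical is verifying that $u \mapsto m(u) = \mathrm{mod}(A(u))$ is $\mu$-measurable. Here I would appeal to the classical fact that the modulus $\mathrm{mod}\colon \mathrm{Aut}(G) \to (0,\infty)$ is a continuous homomorphism when $\mathrm{Aut}(G)$ carries the Braconnier topology (see, e.g., \cite{HiR}, \cite{Hoch}). Granted this, $\mathrm{mod}$ is Borel, and the composition of the $\mu$-measurable map $u \mapsto A(u)$ with a continuous function is again $\mu$-measurable; positivity is automatic since the modulus takes values in $(0,\infty)$. With all hypotheses of Theorem~\ref{th:bdd_Lqp_two} verified, the conclusion (for $S = S' = G$ and the specified $m$) follows at once.

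The only conceptual obstacle is thus the measurability issue, which is entirely reduced to the known continuity of the modulus in the Braconnier topology; once that is cited, no further work is required beyond assembling the pieces above.
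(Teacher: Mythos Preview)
Your proposal is correct and follows essentially the same route as the paper's proof: verify Definition~\ref{egree} with $m(u)=\mathrm{mod}(A(u))$ using the defining property of the modulus (with equality in \eqref{nuA(u)}), invoke continuity of $\mathrm{mod}$ on $\mathrm{Aut}(G)$ for measurability of $m$, and then apply Theorem~\ref{th:bdd_Lqp_two}. You are slightly more explicit than the paper in spelling out $\sigma$-finiteness of $\nu$ and Borel measurability of preimages, but the argument is the same.
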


\begin{proof} 
  In this case all the conditions of Definition \ref{egree}  with $S=S'=G$    are fulfilled for $(A(u))_{u\in \Omega}$.  Indeed, if we take  $m(u)={\rm mod}(A(u))$,   the equalitis in    \eqref{nuA(u)} and \eqref{IfA(u)}  are valid (see, e.~g., \cite{HiR}).  Since  the map $\psi\mapsto {\rm mod}(\psi)$ from  $\mathrm{Aut}(G)$ to $(0,\infty)$ is continuous (see, e.~g.,  \cite[(26.21)]{HiR}),  the family $(A(u))_{u\in \Omega}$  agrees with the measure $\nu$, and Theorem \ref{th:bdd_Lqp_two} is applicable with   $S=S'=G$. 
\end{proof}

The next corollary shows in particular that the condition in  Corollary \ref{cr:Lp} that  $\|\Phi\|_{m,\mu}^{(p)}<\infty$    can not be weakened in general.  This implies that the condition \eqref{Phi_qp} in Theorem  \ref{th:bdd_Lqp_two}   can not be weakened in general, too. It shows also that the equality is possible in \eqref{4_two} (see also Remark \ref{discr} below).

\begin{corollary}\label{cr:Lp_compactgroup} Let $S=S'=G$ be a compact or discrete  group with   the normalized Haar measure $\nu$, and  $\Phi(u,x)= \Phi(u)$ a $\mu$-measurable one variable kernel.  Assume that the map $u\mapsto  A(u)$ from $\Omega$ to $\mathrm{Aut}(G)$ is measurable with respect to the measure $\mu$ in $\Omega$ and  $1\leq p\le \infty$. 

(i) If $\Phi\in L^1(\mu)$ then the operator \eqref{general_one}
 is bounded in $L^p(\nu)$ and  one has $\|\mathcal{H}_{\Phi, A,\mu}\|_{L^p\to L^p}$$\le \|\Phi\|_{L^1(\mu)}$.
 
 (ii) If $G$ is  compact and $\Phi(u)\ge 0$ then the condition $\Phi\in L^1(\mu)$  is also necessary for the boundedness of   the operator \eqref{general_one} in $L^p(\nu)$,  and one has $\|\mathcal{H}_{\Phi, A,\mu}\|_{L^p\to L^p}$$=\|\Phi\|_{ L^1(\mu)}$.
 \end{corollary}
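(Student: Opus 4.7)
The plan is to reduce both parts to observations that are essentially free given the earlier machinery: the upper bound is a direct application of Corollary \ref{cr:Lp_group} once the modulus is computed, and the lower bound is obtained by testing the operator on the constant function.

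For part (i), I would first observe that on a compact group the Haar measure is finite, and an automorphism-image of the full group back to itself forces ${\rm mod}(A(u))=1$; on a discrete group with counting measure the same holds since every automorphism is a bijection of the underlying set. Thus $m(u)\equiv 1$, so the weight in Definition \ref{egree} is trivial and
\[
\|\Phi\|_{m,\mu}^{(p)} \;=\; \int_\Omega |\Phi(u)|\,d\mu(u) \;=\; \|\Phi\|_{L^1(\mu)}.
\]
Since $(A(u))_{u\in\Omega}$ agrees with $\nu$ in a weak form (equality in \eqref{nuA(u)} with $m\equiv 1$), and the measurability assumption on $u\mapsto A(u)$ together with continuity of ${\rm mod}$ produces a $\mu$-measurable $m$, Corollary \ref{cr:Lp_group} (specialised via Corollary \ref{cr:Lp2}) yields the bound $\|\mathcal{H}_{\Phi,A,\mu}\|_{L^p\to L^p}\le \|\Phi\|_{L^1(\mu)}$ at once.

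For part (ii), with $G$ compact and $\Phi\ge 0$, I would simply test the operator on the constant function $f\equiv 1$. Because $\nu$ is normalized we have $\|f\|_{L^p(\nu)}=1$, and
\[
(\mathcal{H}_{\Phi,A,\mu}f)(x) \;=\; \int_\Omega \Phi(u)\,d\mu(u) \;=\; \|\Phi\|_{L^1(\mu)}
\]
as an element of $[0,\infty]$, independent of $x$. If $\mathcal{H}_{\Phi,A,\mu}$ is bounded on $L^p(\nu)$ then the left side is finite, forcing $\Phi\in L^1(\mu)$; and since the value is a constant function on the probability space $G$, its $L^p$-norm equals $\|\Phi\|_{L^1(\mu)}$, giving the matching lower bound $\|\mathcal{H}_{\Phi,A,\mu}\|_{L^p\to L^p}\ge \|\Phi\|_{L^1(\mu)}$. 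Combined with (i) this yields the claimed equality.

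There is essentially no serious obstacle: the only point that requires any care is verifying that ${\rm mod}\equiv 1$ in the compact/discrete setting, which amounts to remarking that a compact group has finite Haar measure (hence the scalar $c$ in $\nu\circ A(u)=c\cdot\nu$ must equal $1$) and that a discrete group's counting measure is preserved by any bijection. Everything else is a direct appeal to previously proved results.
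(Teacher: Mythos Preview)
Your proposal is correct and follows essentially the same approach as the paper: for (i) you invoke Corollary~\ref{cr:Lp_group} after noting that ${\rm mod}(A(u))\equiv 1$ on compact or discrete groups, and for (ii) you test on the constant function $f\equiv 1$ to obtain the matching lower bound---exactly as the paper does. Your added justification for why the modulus is identically $1$ is a welcome elaboration but does not change the argument.
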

 
 \begin{proof} 
 (i) This follows from Corollary \ref{cr:Lp_group}, since ${\rm mod}(A(u))\equiv 1$ for compact or discrete  group $G$.
 
 (ii) 
  If  the operator $\mathcal{H}_{\Phi, A,\mu}$
 is bounded in $L^p(\nu)$ then for the function $f\equiv 1$ on $G$ we have $\|f\|_{L^p(\nu)}=1$, and $\mathcal{H}_{\Phi, A,\mu}f\in L^p(\nu)$.  Therefore
 \begin{align*}
 \infty&>\|\mathcal{H}_{\Phi, A,\mu}f\|_{L^p(\nu)}=\left(\int_G |\mathcal{H}_{\Phi, A,\mu}f(x)|^pd\nu(x)\right)^{\frac1p}\\
 &=\left(\int_G \left(\int_\Omega \Phi(u)d\mu(u)\right)^pd\nu(x)\right)^{\frac1p}\\
 &= \int_\Omega \Phi(u)d\mu(u)=\|\Phi\|_{ L^1(\mu)}.
 \end{align*}
 Thus, $\Phi\in L^1(\mu)$   and $\|\mathcal{H}_{\Phi, A,\mu}\|_{L^p\to L^p}\ge\|\Phi\|_{ L^1(\mu)}$.
\end{proof}

\begin{remark}\label{discr} Let    $1\le p\le \infty$,  $S=S'$ be the additive group $\mathbb{R}^d$, $\Omega=\mathbb{Z}$ endowed with the counting measure, $\varphi:\mathbb{Z}\to \mathbb{C}$. The general form of a topological   automorphism of   $\mathbb{R}^d$  looks as $\psi(x)= Ax$, where $A\in {\rm GL}(d,\mathbb{R})$. If  $A_k\in {\rm GL}(d,\mathbb{R})$ an operator \eqref{general_one} turns into discrete Hausdorff operator of the form
$$
(\mathcal{H}_{\varphi, A}f)(x)=\sum_{k\in \mathbb{Z}}\varphi(k)f(A_kx).
$$
 In this case, $m(k)=|\det A_k|$  and therefore
$$
\|\varphi\|_{m,\mu}^{(p)}=\sum_{k\in \mathbb{Z}}|\varphi(k)||\det A_k|^{-\frac 1p}.
$$
As was shown in \cite[Lemma 2.2]{MMAS} the condition $\|\varphi\|_{m,\mu}^{(p)}<\infty$ in the Corollary \ref{cr:Lp} for the $L^p$ boundedness of $\mathcal{H}_{\varphi,A}$ cannot be weakened in general,
	and the equality $\|\mathcal{H}_{\varphi, A}\|_{L^p\to L^p}=\|\varphi\|_{m,\mu}^{(p)}$  is possible.
\end{remark}

\begin{example}
One cannot expect  that Hausdorff operator is bounded in $L^p$ for $0<p<1$ because even the simplest nontrivial Hausdorff operator, namely
 the continuous Ces\'{a}ro operator
$$
(\mathcal{C}f)(x)=\int_0^1f(ux)du
$$
is unbounded in $L^p(0,1)$ for $0<p<1$. Indeed, consider the function $f_\alpha(x)=x^{-\alpha}$ where $1<\alpha<\frac 1p$. Then $f_\alpha\in L^p(0,1)\setminus L^1(0,1)$ and $(\mathcal{C}f_\alpha)(x)=\infty$.\footnote{On the other hand, a wide class of  Hausdorff operators is bounded in Hardy spaces $H^p$ for $0<p<1$ (see \cite{LiMi1}).}
\end{example}

\end{document}